\documentclass[12pt,reqno]{amsart}
\usepackage{amscd,amssymb,amsmath,tikz}
\begin{document}

\newtheorem{thm}[equation]{Theorem}
\numberwithin{equation}{section}
\newtheorem{cor}[equation]{Corollary}
\newtheorem{expl}[equation]{Example}
\newtheorem{rmk}[equation]{Remark}
\newtheorem{conv}[equation]{Convention}
\newtheorem{claim}[equation]{Claim}
\newtheorem{lem}[equation]{Lemma}
\newtheorem{sublem}[equation]{Sublemma}
\newtheorem{conj}[equation]{Conjecture}
\newtheorem{defin}[equation]{Definition}
\newtheorem{diag}[equation]{Diagram}
\newtheorem{prop}[equation]{Proposition}
\newtheorem{notation}[equation]{Notation}
\newtheorem{tab}[equation]{Table}
\newtheorem{fig}[equation]{Figure}
\newcounter{bean}
\renewcommand{\theequation}{\thesection.\arabic{equation}}

\raggedbottom \voffset=-.7truein \hoffset=0truein \vsize=8truein
\hsize=6truein \textheight=8truein \textwidth=6truein
\baselineskip=18truept

\def\mapright#1{\ \smash{\mathop{\longrightarrow}\limits^{#1}}\ }
\def\mapleft#1{\smash{\mathop{\longleftarrow}\limits^{#1}}}
\def\mapup#1{\Big\uparrow\rlap{$\vcenter {\hbox {$#1$}}$}}
\def\mapdown#1{\Big\downarrow\rlap{$\vcenter {\hbox {$\ssize{#1}$}}$}}
\def\mapne#1{\nearrow\rlap{$\vcenter {\hbox {$#1$}}$}}
\def\mapse#1{\searrow\rlap{$\vcenter {\hbox {$\ssize{#1}$}}$}}
\def\mapr#1{\smash{\mathop{\rightarrow}\limits^{#1}}}
\def\ss{\smallskip}
\def\s{\sigma}
\def\l{\lambda}
\def\vp{v_1^{-1}\pi}
\def\at{{\widetilde\alpha}}

\def\sm{\wedge}
\def\la{\langle}
\def\ra{\rangle}
\def\ev{\text{ev}}
\def\od{\text{od}}
\def\on{\operatorname}
\def\ol#1{\overline{#1}{}}
\def\spin{\on{Spin}}
\def\cat{\on{cat}}
\def\lbar{\ell}
\def\qed{\quad\rule{8pt}{8pt}\bigskip}
\def\ssize{\scriptstyle}
\def\a{\alpha}
\def\bz{{\Bbb Z}}
\def\Rhat{\hat{R}}
\def\im{\on{im}}
\def\ct{\widetilde{C}}
\def\ext{\on{Ext}}
\def\sq{\on{Sq}}
\def\eps{\epsilon}
\def\ar#1{\stackrel {#1}{\rightarrow}}
\def\br{{\bold R}}
\def\bC{{\bold C}}
\def\bA{{\bold A}}
\def\bB{{\bold B}}
\def\bD{{\bold D}}
\def\bC{{\bold C}}
\def\bh{{\bold H}}
\def\bQ{{\bold Q}}
\def\bP{{\bold P}}
\def\bx{{\bold x}}
\def\bo{{\bold{bo}}}
\def\dh{\widehat{d}}
\def\si{\sigma}
\def\Vbar{{\overline V}}
\def\dbar{{\overline d}}
\def\wbar{{\overline w}}
\def\Sum{\sum}
\def\tfrac{\textstyle\frac}

\def\tb{\textstyle\binom}
\def\Si{\Sigma}
\def\w{\wedge}
\def\equ{\begin{equation}}
\def\b{\beta}
\def\G{\Gamma}
\def\L{\Lambda}
\def\g{\gamma}
\def\d{\delta}
\def\k{\kappa}
\def\psit{\widetilde{\Psi}}
\def\tht{\widetilde{\Theta}}
\def\psiu{{\underline{\Psi}}}
\def\thu{{\underline{\Theta}}}
\def\aee{A_{\text{ee}}}
\def\aeo{A_{\text{eo}}}
\def\aoo{A_{\text{oo}}}
\def\aoe{A_{\text{oe}}}
\def\vbar{{\overline v}}
\def\endeq{\end{equation}}
\def\sn{S^{2n+1}}
\def\zp{\bold Z_p}
\def\cR{{\mathcal R}}
\def\P{{\mathcal P}}
\def\cQ{{\mathcal Q}}
\def\cj{{\cal J}}
\def\zt{{\bold Z}_2}
\def\bs{{\bold s}}
\def\bof{{\bold f}}
\def\bq{{\bold Q}}
\def\be{{\bold e}}
\def\Hom{\on{Hom}}
\def\ker{\on{ker}}
\def\kot{\widetilde{KO}}
\def\coker{\on{coker}}
\def\da{\downarrow}
\def\colim{\operatornamewithlimits{colim}}
\def\zphat{\bz_2^\wedge}
\def\io{\iota}
\def\om{\omega}
\def\Prod{\prod}
\def\e{{\cal E}}
\def\zlt{\Z_{(2)}}
\def\exp{\on{exp}}
\def\abar{{\overline a}}
\def\xbar{{\overline x}}
\def\ybar{{\overline y}}
\def\zbar{{\overline z}}
\def\mbar{{\overline m}}
\def\nbar{{\overline n}}
\def\sbar{{\overline s}}
\def\kbar{{\overline k}}
\def\bbar{{\overline b}}
\def\et{{\widetilde E}}
\def\ni{\noindent}
\def\tsum{\textstyle \sum}
\def\coef{\on{coef}}
\def\den{\on{den}}
\def\lcm{\on{l.c.m.}}
\def\vi{v_1^{-1}}
\def\ot{\otimes}
\def\psibar{{\overline\psi}}
\def\thbar{{\overline\theta}}
\def\mhat{{\hat m}}
\def\exc{\on{exc}}
\def\ms{\medskip}
\def\ehat{{\hat e}}
\def\etao{{\eta_{\text{od}}}}
\def\etae{{\eta_{\text{ev}}}}
\def\dirlim{\operatornamewithlimits{dirlim}}
\def\gt{\widetilde{L}}
\def\lt{\widetilde{\lambda}}
\def\st{\widetilde{s}}
\def\ft{\widetilde{f}}
\def\sgd{\on{sgd}}
\def\lfl{\lfloor}
\def\rfl{\rfloor}
\def\ord{\on{ord}}
\def\gd{{\on{gd}}}
\def\rk{{{\on{rk}}_2}}
\def\nbar{{\overline{n}}}
\def\MC{\on{MC}}
\def\lg{{\on{lg}}}
\def\cH{\mathcal{H}}
\def\cS{\mathcal{S}}
\def\cP{\mathcal{P}}
\def\N{{\Bbb N}}
\def\Z{{\Bbb Z}}
\def\Q{{\Bbb Q}}
\def\R{{\Bbb R}}
\def\C{{\Bbb C}}

\def\mo{\on{mod}}
\def\xt{\times}
\def\notimm{\not\subseteq}
\def\Remark{\noindent{\it  Remark}}
\def\kut{\widetilde{KU}}

\def\*#1{\mathbf{#1}}
\def\0{$\*0$}
\def\1{$\*1$}
\def\22{$(\*2,\*2)$}
\def\33{$(\*3,\*3)$}
\def\ss{\smallskip}
\def\ssum{\sum\limits}
\def\dsum{\displaystyle\sum}
\def\la{\langle}
\def\ra{\rangle}
\def\on{\operatorname}
\def\proj{\on{proj}}
\def\od{\text{od}}
\def\ev{\text{ev}}
\def\o{\on{o}}
\def\U{\on{U}}
\def\lg{\on{lg}}
\def\a{\alpha}
\def\bz{{\Bbb Z}}
\def\eps{\varepsilon}
\def\bc{{\bold C}}
\def\bN{{\bold N}}
\def\bB{{\bold B}}
\def\bW{{\bold W}}
\def\nut{\widetilde{\nu}}
\def\tfrac{\textstyle\frac}
\def\b{\beta}
\def\G{\Gamma}
\def\g{\gamma}
\def\zt{{\Bbb Z}_2}
\def\zth{{\bold Z}_2^\wedge}
\def\bs{{\bold s}}
\def\bx{{\bold x}}
\def\bof{{\bold f}}
\def\bq{{\bold Q}}
\def\be{{\bold e}}
\def\lline{\rule{.6in}{.6pt}}
\def\xb{{\overline x}}
\def\xbar{{\overline x}}
\def\ybar{{\overline y}}
\def\zbar{{\overline z}}
\def\ebar{{\overline \be}}
\def\nbar{{\overline n}}
\def\ubar{{\overline u}}
\def\bbar{{\overline b}}
\def\et{{\widetilde e}}
\def\lf{\lfloor}
\def\rf{\rfloor}
\def\ni{\noindent}
\def\ms{\medskip}
\def\Dhat{{\widehat D}}
\def\what{{\widehat w}}
\def\Yhat{{\widehat Y}}
\def\abar{{\overline{a}}}
\def\minp{\min\nolimits'}
\def\sb{{$\ssize\bullet$}}
\def\mul{\on{mul}}
\def\N{{\Bbb N}}
\def\Z{{\Bbb Z}}
\def\S{\Sigma}
\def\Q{{\Bbb Q}}
\def\R{{\Bbb R}}
\def\C{{\Bbb C}}
\def\Xb{\overline{X}}
\def\eb{\overline{e}}
\def\notint{\cancel\cap}
\def\cS{\mathcal S}
\def\cR{\mathcal R}
\def\el{\ell}
\def\TC{\on{TC}}
\def\GC{\on{GC}}
\def\wgt{\on{wgt}}
\def\Ht{\widetilde{H}}
\def\wbar{\overline w}
\def\dstyle{\displaystyle}
\def\Sq{\on{sq}}
\def\Om{\Omega}
\def\ds{\dstyle}
\def\tz{tikzpicture}
\def\zcl{\on{zcl}}
\def\bd{\bold{d}}
\def\io{\iota}
\def\Vb#1{{\overline{V_{#1}}}}
\def\Ebar{\overline{E}}

\title
{The mod-2 connected $KU$-homology of the Eilenberg-MacLane space $K(\zt,2)$}
\author{Donald M. Davis}
\address{Department of Mathematics, Lehigh University\\Bethlehem, PA 18015, USA}
\email{dmd1@lehigh.edu}
\author{W. Stephen Wilson}
\address{Department of Mathematics, Johns Hopkins University\\Baltimore, MD 01220, USA}
\email{wwilson3@jhu.edu}
\date{June 24, 2021}

\keywords{K-theory, Eilenberg-MacLane space, Adams spectral sequence}
\thanks {2000 {\it Mathematics Subject Classification}: 55T15, 55P20, 55N15.}

\maketitle

\begin{abstract} We compute the mod-2 connected $KU$-homology of the Eilenberg-MacLane space $K(\zt,2)$, using a novel Adams spectral sequence analysis.
\end{abstract}

\section{Introduction}\label{intro}
Let $k(1)$ denote the connected spectrum for mod-2 complex $K$-theory, and $k(1)_*(-)$ the associated reduced homology theory. It is the connected version of the Morava $K$-theory $K(1)$. The coefficient ring is $k(1)_*=\zt[v]$, with $|v|=2$, and the mod-2 cohomology groups as a module over the mod-2 Steenrod algebra $A$ are given by $H^*(k(1))\approx A/A(Q_1)$, where $Q_1$ is the Milnor primitive of grading 3. (All cohomology groups have coefficients in $\zt$.) By a standard change-of-rings theorem, the Adams spectral sequence (ASS) converging to $k(1)_*(X)$ has $E_2=\ext_{E(Q_1)}(\Ht^*(X),\zt)$, where $E(Q_1)$ is the exterior algebra over $\zt$ with single generator $Q_1$.  Any $E(Q_1)$-module $\Ht^*X$ splits as $F(X)\oplus T(X)$, where $F(X)$ is a free $E(Q_1)$-module, and $T(X)$ a trivial $E(Q_1)$-module. The $E_2$-term of the ASS converging to $k(1)_*(X)$ consists of filtration-0 $\zt$'s annihilated by $v$, corresponding to an $E(Q_1)$-basis of $F(X)$, and $v$-towers $\{v^ix:\ i\ge0\}$ for $x$ in a $\zt$-basis of $T(X)$. See Figure \ref{fig1} for a depiction of two $v$-towers.

In this paper, we compute the ASS for $k(1)_*(K_2)$, where $K_2$ is the Eilenberg-MacLane space $K(\zt,2)$. One reason for doing this, aside from its intrinsic interest, is to develop methods that might be useful in computing integral complex and real connective $KU$- and $KO$-homology groups $ku_*(K(\zt,n))$ and $ko_*(K(\zt,n))$.  Partial calculations of these were made for small values of $n$ in \cite{DW} in order to detect Spin manifolds with nonzero dual Stiefel-Whitney classes.

We now develop some notation which will be used in stating our result. Let $P$ denote a polynomial algebra and $E$ an exterior algebra over $\zt$. For an algebra $B$, we let $\overline{B}=I(B)$ denote the kernel of the augmentation, and for an exterior algebra $E[S]$, we write $\Ebar[S]$ for $\overline{E[S]}$.
We will show in Section \ref{E2sec} that
\begin{equation}\label{E2}\ext_{E(Q_1)}(T(K_2),\zt)=P[v]\otimes \Ebar[x_9,\ x_{17},\ x_{2^e},e\ge2,\ x_{2^{e+2}+2},e\ge2],\end{equation}
with $|x_i|=i$ and $v\in\ext_{E(Q_1)}^{1,3}(\zt,\zt)$. The bulk of our proof will be in finding the pattern of differentials among these $v$-towers. In Figure \ref{fig1}, we illustrate our first differential, $d_2(x_9)=v^2x_4$, using the usual $(t-s,s)$ depiction of the ASS. This will leave in $k(1)_*(K_2)$ a truncated $v$-tower on $x_4$ of height 2; i.e., nonzero elements $x_4$ and $vx_4$.

\bigskip
\begin{minipage}{6in}
\begin{fig}\label{fig1}

{\bf A differential in the ASS of $k(1)_*(K_2)$}

\begin{center}

\begin{\tz}[scale=.55]
\draw (4,0) -- (13,4.5);
\draw (3,0) -- (13,0);
\draw (9,0) -- (14,2.5);
\draw (9,0) -- (8,2);
\draw (11,1) -- (10,3);
\draw (13,2) -- (12,4);
\node at (4,-.4) {$4$};
\node at (9,-.4) {$9$};
\node at (4,0) {$\ssize\bullet$};
\node at (9,0) {$\ssize\bullet$};
\node at (6,1) {$\ssize\bullet$};
\node at (8,2) {$\ssize\bullet$};
\node at (10,3) {$\ssize\bullet$};
\node at (12,4) {$\ssize\bullet$};
\node at (11,1) {$\ssize\bullet$};
\node at (13,2) {$\ssize\bullet$};
\end{\tz}
\end{center}
\end{fig}
\end{minipage}

\bigskip
Let $y_e=x_{2^e}$, $z_e=x_{2^{e+2}+2}$, $p_2=x_9$, $p_3=x_{17}$, and for $e\ge2$, $p_{e+2}=p_ey_ez_e$. Let
$$\L_{k,\ell}=E[y_i,i\ge k,\ z_j,j\ge\ell].$$
Let
$$h(e)=\frac{2^e+(-1)^{e+1}}3+\biggl\lfloor\frac e2\biggr\rfloor \quad\text{and}\quad h'(e)=\frac{2^{e+1}+(-1)^e}3-\biggl\lfloor \frac e2\biggr\rfloor.$$
Note that $h'(1)=1$,  $h(2)=h'(2)=2$, $h(3)=h'(3)=4$, and for $e\ge3$, $h'(e)<h(e+1)<h'(e+1)$. For $e=4$, 5, 6, 7, and 8, $h(e)$ is 7, 13, 24, 46, 89, while $h'(e)$ is 9, 19, 40, 82, and 167. One easily establishes, by induction on $e$, that
\begin{equation}\label{ph}|p_e|=2h(e)+2^e+1.\end{equation}

Our main theorem is
\begin{thm}\label{main} In the $v$-tower portion of the ASS converging to $k(1)_*(K_2)$, whose $E_2$ term was given in (\ref{E2}), there are differentials $$d_{h(e)}(p_eM)=v^{h(e)}y_eM\quad\text{and}\quad d_{h'(e)}(z_eM')=v^{h'(e)}y_ep_eM'$$ for $e\ge2$, annihilating all $v$-towers, leaving just $v$-torsion of the sort described just before Figure \ref{fig1}. Here $M$ is any monomial in $E[p_{e+1}]\otimes\L_{e+1,e}$, and $M'$ is any monomial in $E[p_{e+1}]\otimes \L_{e+1,e+1}$. Then $k(1)_*(K_2)$ consists of the truncated $v$-towers just described plus a family of $\zt$'s annihilated by $v$, enumerated in Proposition \ref{z2s}.\end{thm}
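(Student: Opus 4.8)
The plan is to establish the differentials in Theorem~\ref{main} by leveraging the multiplicative structure of the ASS together with a small number of ``seed'' differentials, and then to bookkeep what survives. First I would set up the algebra: by (\ref{E2}) the $v$-tower part of $E_2$ is $P[v]\otimes\Ebar[S]$ with $S=\{x_9,x_{17},x_{2^e}(e\ge2),x_{2^{e+2}+2}(e\ge2)\}$, and the renaming $y_e,z_e,p_2,p_3,p_{e+2}=p_ey_ez_e$ is chosen precisely so that the claimed differentials take the uniform shape $d_{h(e)}(p_e)=v^{h(e)}y_e$ and $d_{h'(e)}(z_e)=v^{h'(e)}y_ep_e$. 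The first step is to verify the numerology: prove by induction on $e$ the recursions implicit in $h,h'$ — in particular $h(e+2)=h(e)+h'(e)$ coming from $p_{e+2}=p_ey_ez_e$ and the Leibniz rule (a differential on a product of three exterior generators, two of which support differentials landing on the third factor times a $v$-power), and check the stated inequalities $h'(e)<h(e+1)<h'(e+1)$ for $e\ge3$ which guarantee the differentials do not interfere and are applied in the correct order of increasing length. The identity (\ref{ph}), $|p_e|=2h(e)+2^e+1$, is exactly the statement that $d_{h(e)}(p_e)=v^{h(e)}y_e$ is a map of the correct total degree, since $|y_e|=2^e$ and $|v|=2$; so (\ref{ph}) both motivates and partially justifies the pattern.

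Next I would prove the base cases $d_2(x_9)=v^2x_4$ (pictured in Figure~\ref{fig1}) and $d_4(x_{17})=v^4x_8$, i.e. $e=2,3$, directly. Here I would use the $H$-space structure of $K_2$, or better the known computation of $k(1)_*$ of the relevant skeleta / Postnikov pieces, comparing with $k(1)_*(\C P^\infty)$-type patterns via the map classifying the fundamental class, or with $ko/ku$-homology computations from \cite{DW}; a clean route is to identify $x_4,x_8$ and $x_9,x_{17}$ inside the image of a map from a space whose $k(1)$-homology is already understood, forcing the differential by naturality. Then, for $e\ge4$, I would run an induction: assuming all differentials on $p_{e'},z_{e'}$ for $e'<e$ are established, the elements $y_e=x_{2^e}$, $z_e=x_{2^{e+2}+2}$, $p_e$ are built multiplicatively from lower generators (via $p_{e}=p_{e-2}y_{e-2}z_{e-2}$), so the Leibniz rule propagates a differential from the $p_{e-2}$-differential onto $p_e$ up to permanent-cycle ambiguity, and one pins down the remaining indeterminacy — whether the target is $v^{h(e)}y_e$ or a permanent cycle — by a degree/sparseness count: the inequalities among $h,h'$ show $v^{h(e)}y_e$ is the \emph{only} class of the right bidegree in the appropriate range, so the differential is either zero or the claimed one, and it cannot be zero because then the corresponding $v$-tower would survive and contradict the (independently computable) rank of $k(1)_*(K_2)$ in that degree. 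Finally, once all listed differentials are in place, a counting argument (Euler characteristic / Poincar\'e series of $P[v]\otimes\Ebar[S]$ against the differentials, which pair up the $v$-towers two-by-two except for the truncated length-2 stubs on each $y_e$) shows every $v$-tower is either killed or truncated to height exactly $2$, and the leftover filtration-$0$ classes from $F(K_2)$ give the $\zt$'s of Proposition~\ref{z2s}.

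The main obstacle, I expect, is not the formal Leibniz propagation but \emph{pinning down the base-case differentials and excluding the possibility that some $p_e$ (or $z_e$) is a permanent cycle}. Multiplicativity alone only tells you $d(p_e)$ is \emph{determined by} $d(p_{e-2})$ if the latter's target times the extra factors is nonzero and is not itself hit earlier; verifying that no earlier differential has already killed the putative target $v^{h(e)}y_eM$, and that $p_eM$ has not already been hit, is where the delicate inequalities $h'(e)<h(e+1)<h'(e+1)$ and the precise definitions of $\L_{k,\ell}$, $E[p_{e+1}]$ in the statement do their work — so a careful, fully ordered induction on the length $h$ of the differential, processing all differentials of a given length simultaneously, is essential. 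I would organize the argument so that at stage $r$ one knows $E_r$ exactly, identifies all bidegrees supporting a length-$r$ differential, shows each such differential is forced, and passes to $E_{r+1}$; the hardest single input remains the two seed differentials, for which I would fall back on explicit low-dimensional computation and comparison with \cite{DW}.
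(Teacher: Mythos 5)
There is a genuine gap, and it sits exactly where the real work of the paper is. Your central mechanism --- ``the Leibniz rule propagates a differential from the $p_{e-2}$-differential onto $p_e=p_{e-2}y_{e-2}z_{e-2}$'' --- does not produce any differential at all. Since $d_{h(e-2)}(p_{e-2})=v^{h(e-2)}y_{e-2}$ and $y_{e-2}^2=0$, the derivation property gives $d(p_{e-2}y_{e-2})=0$, and likewise $d(y_{e-2}p_{e-2}z_{e-2})=0$ on the page where $z_{e-2}$ supports its differential (this is exactly Lemma \ref{LDL}). So $p_e$ enters the relevant page as a \emph{new indecomposable} cycle, and its differential $d_{h(e)}(p_e)=v^{h(e)}y_e$ is fresh data, not a consequence of multiplicativity. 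Worse, your induction gives no mechanism whatsoever for the differentials $d_{h'(e)}(z_e)=v^{h'(e)}y_ep_e$: the $z_e$ are exterior generators already present in $E_2$, so there is nothing to propagate from, and your fallback --- that the tower must die because the ``independently computable rank of $k(1)_*(K_2)$'' says so --- is circular (that rank is what is being computed) unless you import $K(1)_*(K_2)=0$ from \cite{JW}, which the paper deliberately avoids; and even granting that all towers die, it tells you only that $z_e$ eventually supports or receives a differential, not on which page, so it does not determine $h'(e)$.

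The missing idea is the paper's Lemma \ref{CP}: the map $g:\C P^\infty\to K_2$ composed with the $H$-space squaring map is null, and $f_*(\b_{2i})=v^i\b_i$ forces $v^{2^{e-1}}x_{2^e}=0$. This gives an a priori \emph{upper bound} on the page at which each $y_e$-tower must be truncated, and combined with the sparseness of odd-degree generators it forces $d_{h(e)}(p_e)=v^{h(e)}y_e$ as the only possibility. The differential on $z_e$ is then deduced \emph{backwards}: $Vy_{e+2}$ must be hit by an odd class in a narrow degree window; the only odd class present ($y_{e+1}p_{e+1}$) is a cycle by Lemma \ref{LDL}, so the needed source can only exist if $z_e$ has already fired, creating the new indecomposable $p_{e+2}=y_ep_ez_e$. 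This ``higher differentials force lower ones'' step is the stated novelty of the paper and is absent from your outline. (Two smaller points: your claim that the surviving stubs all have length $2$ contradicts Table \ref{T4} --- the heights are $h(e)$ and $h'(e)$; and $h(e+2)=h(e)+h'(e)$ plus the degree check (\ref{ph}) is consistency numerology, not evidence for the differentials.)
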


In Table \ref{T4}, we list the first few generators, including their grading, the $v$ height, and the gradings of the first four exterior generators with which the $v$-tower is tensored.

\begin{table}[h]
\caption{First few generators}
\label{T4}

\begin{tabular}{cccc}
generator&grading&$v$-height&$M^{(')}\ldots$\\
\hline
$y_2$&$4$&$2$&$8,16,17,18$\\
$y_2p_2$&$13$&$2$&$8,16,17,32$\\
$y_3$&$8$&$4$&$16,31,32,34$\\
$y_3p_3$&$25$&$4$&$16,31,32,64$\\
$y_4$&$16$&$7$&$32,59,64,66$\\
$y_4p_4$&$47$&$9$&$32,59,64,128$\\
$y_5$&$32$&$13$&$64,113,128,130$\\
\end{tabular}
\end{table}

It was shown in \cite[Appendix]{JW}, building on \cite{RW2}, that, for the first periodic Morava $K$-theory $K(1)$, one has $K(1)_*(K_2)=0$. This is just the $v$-localization of our $k(1)_*(K_2)$, and so is equivalent to the fact that no infinite $v$-towers survive. Our proof does not use the fact that $K(1)_*(K_2)=0$, and thus gives a new proof of this, relying just on ASS manipulations. This also follows from \cite{AH}, where they show $KU^*(K_2) = 0$.

 Although it was \cite{DW} that motivated this paper, the known result that $K(1)_*(K_2) = 0$ and the small size of $H^*(K_2)$ inspire a search for the differentials in the Atiyah-Hirzebruch spectral sequence (AHSS) that annihilate everything.  The second author attempted to find these differentials periodically for decades.  The ASS differentials $d_r$ that we find in this paper are the same as the AHSS differentials $d_{2r+1}$ (the proof is the same) and solves the problem.

\section{Determining $E_2$}\label{E2sec}
In this section, we compute the $E_2$ term of the ASS converging to $k(1)_*(K_2)$. It is well-known (\cite{Ser}) that $H^*(K_2)$ is a polynomial algebra on elements $u_{2^i+1}$ for $i\ge0$ defined by
$$u_2=\io_2,\ u_3=\sq^1\io_2,\ u_5=\sq^2\sq^1\io_2,\ u_9=\sq^4\sq^2\sq^1\io_2,\ldots.$$
We have $Q_1u_2=u_5$, $Q_1u_3=u_3^2$, $Q_1u_5=0$, and for $i\ge1$,
\begin{eqnarray*}Q_1u_{2^{i+2}+1}&=&(\sq^3+\sq^2\sq^1)\sq^{2^{i+1}}\sq^{2^i}\cdots\sq^1\io_2\\
&=&\sq^{2^{i+1}+3}\sq^{2^i}\cdots\sq^1\io_2+\sq^{2^{i+1}+2}\sq^{2^i+1}(\sq^{2^i-1}\cdots\sq^1\io_2)\\
&=&0+(\sq^{2^i-1}\cdots\sq^1\io_2)^4=u_{2^i+1}^4.\end{eqnarray*}

Let $TP_n[x]=P[x]/x^n$. From the $Q_1$-action just described, we easily obtain that $H^*(K_2)=T(K_2)\otimes F'(K_2)$, where
\begin{equation}\label{T}T(K_2)=P[u_2^2]\otimes TP_4[u_9+u_3^3]\otimes TP_4[u_{17}+u_2u_5^3]\otimes\bigotimes_{i\ge5}E[u_{2^i+1}^2]\end{equation}
is a trivial $E(Q_1)$-submodule of $H^*(K_2)$, corresponding to the homology of $H^*K_2$ with respect to the differential defined by the $Q_1$-action, and
$$F'(K_2)=(E[u_2]\ot P[u_5])\ot(E[u_3]\ot P[u_3^2])\ot\bigotimes_{i\ge3}(E[u_{2^{i+2}+1}]\ot P[u_{2^i+1}^4])$$
satisfies that $\overline{F'(K_2)}$ is a free $E(Q_1)$-submodule. Then $F(K_2)=\overline{F'(K_2)}\ot T(K_2)$, and we obtain the following result for the filtration-0 $\zt$'s in $k(1)_*(K_2)$ annihilated by $v$, which we mostly ignore.
\begin{prop}\label{z2s} The Poincar\'e series for the filtration-$0$ $\zt$'s in $k(1)_*(K_2)$ annihilated by $v$ is $P_{\overline{F'}}\cdot P_T$, where
$$P_{\overline{F'}}=\frac{1+x^2}{1-x^5}\cdot\frac{1}{1-x^6}\cdot\prod_{i\ge5}\frac{1+x^{2^i+1}}{1-x^{2^i+4}}-\frac1{1+x^3}$$
and
$$P_T= \frac1{1-x^4}\cdot\frac{1-x^{36}}{1-x^9}\cdot\frac{1-x^{68}}{1-x^{17}}\cdot\prod_{i\ge5}(1+x^{2^{i+1}+2}).$$
\end{prop}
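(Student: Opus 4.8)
The plan is to reduce the statement to a routine computation with Poincar\'e series, built on the $E(Q_1)$-module decomposition of $H^*(K_2)$ just recorded. By the description of the ASS in Section \ref{intro}, the filtration-$0$ classes of $E_2$ annihilated by $v$ are in bijection with an $E(Q_1)$-basis of the free summand $F(K_2)$; being in filtration $0$ they are not targets of differentials, and since the differentials of Theorem \ref{main} involve only the $v$-towers they are not sources of differentials either, so they survive to $E_\infty$ and account for exactly the classes in the statement. Thus it suffices to compute the Poincar\'e series of an $E(Q_1)$-basis of $F(K_2)$. Here I would use $F(K_2)=\overline{F'(K_2)}\ot T(K_2)$ together with the facts recorded above that $\overline{F'(K_2)}$ is free over $E(Q_1)$ and $T(K_2)$ is trivial: fixing an isomorphism $\overline{F'(K_2)}\cong E(Q_1)\ot W$ of $E(Q_1)$-modules and noting that on $(E(Q_1)\ot W)\ot T(K_2)$ the operator $Q_1$ acts only through the first factor, one gets $F(K_2)\cong E(Q_1)\ot(W\ot T(K_2))$, free on $W\ot T(K_2)$. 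Hence the desired series is $P_W\cdot P_{T(K_2)}$, with $P_W=P_{\overline{F'(K_2)}}/(1+x^3)$ since $|Q_1|=3$.

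It then remains to recognize the two factors as the displayed expressions. For $P_{T(K_2)}$, I multiply the series of the tensor factors of (\ref{T}): $\tfrac1{1-x^4}$ for $P[u_2^2]$ (as $|u_2^2|=4$), $1+x^9+x^{18}+x^{27}=\tfrac{1-x^{36}}{1-x^9}$ and $\tfrac{1-x^{68}}{1-x^{17}}$ for the two copies of $TP_4$ (using $|u_9|=9$ and $|u_{17}|=17$), and $\prod_{i\ge5}(1+x^{2^{i+1}+2})$ for $\bigotimes_{i\ge5}E[u_{2^i+1}^2]$ (as $|u_{2^i+1}^2|=2^{i+1}+2$); this product is $P_T$. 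For $P_W$, the tensor expression for $F'(K_2)$ gives $P_{F'(K_2)}=\tfrac{1+x^2}{1-x^5}\cdot\tfrac{1+x^3}{1-x^6}\cdot\prod_{i\ge3}\tfrac{1+x^{2^{i+2}+1}}{1-x^{2^{i+2}+4}}$; reindexing the product by $i\mapsto i+2$ and pulling the lone factor $1+x^3$ (contributed by $E[u_3]$) to the front yields $P_{F'(K_2)}=(1+x^3)\,Q$ with $Q=\tfrac{1+x^2}{1-x^5}\cdot\tfrac1{1-x^6}\cdot\prod_{i\ge5}\tfrac{1+x^{2^i+1}}{1-x^{2^i+4}}$. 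Since $\overline{F'(K_2)}=I(F'(K_2))$ we have $P_{\overline{F'(K_2)}}=P_{F'(K_2)}-1=(1+x^3)Q-1$, whence $P_W=Q-\tfrac1{1+x^3}$, which is exactly the displayed $P_{\overline{F'}}$. Multiplying $P_W$ by $P_{T(K_2)}$ gives the claim.

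I expect no serious obstacle: once the decomposition of $H^*(K_2)$ recorded above is granted, the argument is formal. The two points worth a little care are: the passage from the free module $\overline{F'(K_2)}$ to its space of generators, i.e.\ the division of $P_{\overline{F'(K_2)}}$ by $1+x^3$, which is legitimate precisely because $\overline{F'(K_2)}$ was shown to be free (so $Q-\tfrac1{1+x^3}$ is automatically a power series with nonnegative coefficients), not because $P_{F'(K_2)}-1$ is formally divisible by $1+x^3$; and the bookkeeping of the reindexing $i\mapsto i+2$ in the infinite products, together with the observation that $u_3$ is the only exterior generator of $F'(K_2)$ in degree $3$, so that exactly one factor $1+x^3$ is available to be cancelled.
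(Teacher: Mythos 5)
Your argument is correct and is essentially the paper's proof: the paper's entire justification is the one sentence that $P_{\overline{F'}}$ is the Poincar\'e series of $\overline{F'}$ divided by $1+x^3$, and you have simply written out the routine bookkeeping (the tensor-factor series, the reindexing $i\mapsto i+2$, and the freeness of $F(K_2)=\overline{F'}\otimes T$ over $E(Q_1)$) that this sentence presupposes.
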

\begin{proof} The formula for $P_{\overline{F'}}$ is obtained from the Poincar\'e series for all of $\overline{F'}$ divided by $(1+x^3)$.\end{proof}

The product structure of $\ext^0_{E(Q_1)}(T(K_2),\zt)$ will be important in studying the differentials in the ASS. This is just the $\zt$-dual $(T(K_2))^*$. The $H$-space multiplication of $K_2$ makes $H^*K_2$ and $H_*K_2$ into dual Hopf algebras, and $H_*K_2$ is an exterior algebra, as it is dual to the primitively-generated polynomial algebra $H^*K_2$.
The action of $Q_1$ makes these into dual differential Hopf algebras, and by \cite[Lemma 4.5]{Br} their homologies with respect to this differential are also dual Hopf algebras. Thus $(T(K_2))^*$ is a Hopf algebra which is a subquotient of the exterior algebra $H_*K_2$. Since exterior algebras over $\zt$ are characterized by the property that squares of nonunits are 0, a subquotient of an exterior algebra is an exterior algebra. Hence $\ext^0_{E(Q_1)}(T(K_2),\zt)$ is an exterior algebra, and, counting generators in (\ref{T}), we obtain (\ref{E2}).

\section{Differentials}
In this section we establish the differentials in the ASS for $k(1)_*(K_2)$, which were stated in Theorem \ref{main}. Our proof will use the following lemmas.
\begin{lem}\label{LDL} In an ASS in which $E_r=P[v]\ot E[x,y]\ot B$, for some algebra $B$, suppose $d_r(x)=v^ry$. Then $d_r(y)=0$, $d_r(xy)=0$, and, after the differential, the $v$-tower part remaining is $P[v]\ot E[xy]\ot B$. In $E_{r+1}$, $xy$ is not a product class.
\end{lem}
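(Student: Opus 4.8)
The plan is to work purely formally with the multiplicative and Leibniz structure of the spectral sequence, treating $x$, $y$, $v$ as symbols subject to $x^2=y^2=0$, $|x|-|y|=r|v|$ (so that $v^r y$ lives in the correct bidegree) and $d_r$ a derivation. First I would record that, since $d_r$ is a derivation and $v$ is a permanent cycle (it detects the coefficient $v\in k(1)_*$, or in the abstract setting it is given to survive), we have $d_r(v)=0$, hence $d_r(v^r y)=v^r d_r(y)$. But $v^r y = d_r(x)$ is a boundary, so $d_r(v^r y)=d_r d_r(x)=0$; as multiplication by $v^r$ is injective on $E_r=P[v]\otimes E[x,y]\otimes B$ (it is a free $P[v]$-module), this forces $d_r(y)=0$. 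Then $d_r(xy)=d_r(x)\,y + x\,d_r(y) = v^r y\cdot y + 0 = v^r y^2 = 0$, using $y^2=0$ in the exterior algebra.

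Next I would compute $E_{r+1}$ as the homology of $d_r$ on $E_r$. Write $E_r = \bigl(P[v]\otimes B\bigr)\otimes E[x,y]$, and decompose $E[x,y]$ over $P[v]\otimes B$ into the rank-two complexes it generates under $d_r$. The only nonzero component of $d_r$ is $x\mapsto v^r y$ and $xy \mapsto 0$ together with $1\mapsto 0$, $y\mapsto 0$: so as a complex of $P[v]\otimes B$-modules, $E_r$ splits as $\bigl(P[v]\otimes B\cdot 1\bigr)\;\oplus\;\bigl(P[v]\otimes B\cdot y\bigr)\;\oplus\;\bigl(P[v]\otimes B\cdot x \xrightarrow{\ \cdot v^r\ } P[v]\otimes B\cdot v^r y\bigr)\;\oplus\;\bigl(P[v]\otimes B\cdot xy\bigr)$, where in the two-term piece the map has kernel $0$ (injectivity of $\cdot v^r$) and cokernel $\bigl(P[v]/(v^r)\bigr)\otimes B\cdot y$. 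Taking homology, the $v$-tower (i.e. $v$-torsion-free) part of $E_{r+1}$ is exactly $\bigl(P[v]\otimes B\cdot 1\bigr)\oplus\bigl(P[v]\otimes B\cdot xy\bigr) = P[v]\otimes E[xy]\otimes B$, plus the $v$-torsion summand $\bigl(P[v]/(v^r)\bigr)\otimes B\cdot y$, which is the truncated tower on $y$ of height $r$ that the statement refers to as the surviving $v$-torsion. The class $xy$ is nonzero in $E_{r+1}$ since it was a $d_r$-cycle not hit by $d_r$ (nothing maps to it: $d_r$ raises filtration by $r\ge 1$ and the only classes in the appropriate lower filtration are multiples of $x$, whose image is in the $v^r y$ line).

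Finally, the claim that $xy$ is not a product class in $E_{r+1}$: the candidate factorizations of $xy$ inside the subquotient algebra $E_{r+1}$ would have to come from factorizations present in $E_r$ (or newly created), but $x$ does not survive to $E_{r+1}$ — it supports the differential $d_r(x)=v^ry\ne 0$ — and likewise any element whose product with a surviving class could equal $xy$ would have to involve $x$ for degree and exterior-algebra reasons. More precisely, in $E_r$ the only monomials whose product is $xy$ are $\{x,y\}$ (up to units and elements of $B$), and $x\notin E_{r+1}$; no new decomposables can appear because passing to a subquotient algebra cannot manufacture new factorizations of a class that is indecomposable-looking. So $xy$ is an indecomposable generator of $E_{r+1}$, which is exactly the assertion $E_{r+1}\supseteq P[v]\otimes E[xy]\otimes B$ with $xy$ a new exterior generator. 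The whole argument is elementary; the only point requiring a little care is the bookkeeping that nothing hits $xy$ and that $xy$ genuinely fails to be a product in the subquotient — i.e. the indecomposability claim — since that is the one assertion that is not a pure consequence of the Leibniz rule but rather of the structure of $E[x,y]$ as a module over $P[v]\otimes B$. I expect that indecomposability bookkeeping to be the main (though still minor) obstacle, everything else being a direct derivation computation.
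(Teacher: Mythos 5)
Your proof is correct and follows essentially the same route as the paper's: $d_r(y)=0$ is forced by $d_r^2=0$ together with $v$-torsion-freeness of $E_r$, $d_r(xy)=v^ry^2=0$ comes from the Leibniz rule, and the surviving $v$-tower part is identified by splitting $E_r$ into rank-one pieces and the single rank-two piece $x\mapsto v^ry$ over $P[v]\otimes B$. The one point the paper makes that you omit is that the conclusion about $E_{r+1}$ persists even when $B$ carries additional $d_r$-differentials (by filtering the differentials) --- your direct-sum decomposition tacitly assumes $d_r$ vanishes on $B$, and the more general case is actually used at $e=2,3$ where $h(e)=h'(e)$ produces a second differential of the same length.
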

\begin{proof} If $d_r(y)=v^rz\ne0$, we would have $d_r(d_r(y))=v^{2r}z\ne0$, contradicting $d_r^2=0$. Since $d_r$ is a derivation, $d_r(xy)=v^ry^2$, but $y^2=0$. The statement about what remains after the differential is true even if there are additional $d_r$-differentials in $B$, since we can consider the differentials as being filtered.\end{proof}
\begin{lem}\label{CP} In the ASS whose $E_2$-term was given in (\ref{E2}), $x_{2^e}$ is an infinite cycle, and $v^ix_{2^e}$ must be hit by a differential for some $i\le 2^{e-1}$.
\end{lem}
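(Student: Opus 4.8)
The plan is to handle the two assertions by different routes: ``$x_{2^e}$ is an infinite cycle'' will follow by naturality from a space whose ASS collapses, and the height bound will be obtained inductively.

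For the first assertion, consider the mod-$2$ reduction $f\colon\C P^\infty=K(\Z,2)\to K(\zt,2)=K_2$. As $H^*(\C P^\infty;\zt)=\zt[t]$ is concentrated in even degrees while $Q_1$ has odd degree, $Q_1$ acts trivially on $H^*(\C P^\infty)$, so $\widetilde H^*(\C P^\infty)$ is a trivial $E(Q_1)$-module; hence the ASS for $k(1)_*(\C P^\infty)$ has $E_2=P[v]\otimes\widetilde H_*(\C P^\infty)$ and collapses (it is concentrated in even stems, leaving no possible targets), so every $b_i\in H_{2i}(\C P^\infty;\zt)$ dual to $t^i$ is a permanent cycle. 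Since $f^*u_2=t$ but $f^*u_{2^i+1}=0$ for all $i\ge1$ (these classes are $\sq$-operations on the integral class $t$), the functional $f_*b_{2^{e-1}}$ is dual to $u_2^{2^{e-1}}=(u_2^2)^{2^{e-2}}$, which is exactly the divided-power generator identified with $x_{2^e}$ in Section~\ref{E2sec}. By naturality, $x_{2^e}=f_*(b_{2^{e-1}})$ is a permanent cycle.

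For the second assertion, observe first that by the above and $v$-linearity every $v^ix_{2^e}$ is an infinite cycle, so the real claim is that the whole $v$-tower on $x_{2^e}$ dies before $E_\infty$; and since a $v$-tower dies from the bottom up, it is enough to find a differential hitting $v^{2^{e-1}}x_{2^e}$. I would induct on $e$. For $e=2$, the $E_2$-term (\ref{E2}) has no exterior generator in degrees $5,6,7$, so a direct look at the $(t-s,s)$-plane shows that the $x_4$-tower can be truncated at height $\le 2$ only by the differential $d_2(x_9)=v^2x_4$; establishing that this differential is nonzero (equivalently, that $x_9$ is not a permanent cycle) is the content of the case $e=2$, and it gives the value $2=2^{2-1}$. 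For the inductive step I would use the divided-square relation $x_{2^{e+1}}=\gamma_2(x_{2^e})$ in $H_*(K_2;\zt)$---which comes from $(u_2^2)^{2^{e-1}}=\bigl((u_2^2)^{2^{e-2}}\bigr)^2$---together with the Steenrod-operation structure on the ASS (equivalently, the compatibility of the AHSS differentials with the $\sq$'s noted in the introduction): applying the appropriate operation to a differential truncating the $x_{2^e}$-tower at height $H$ should produce one truncating the $x_{2^{e+1}}$-tower at height $\le 2H$, and iterating from $H=2$ gives the bound $2^{e-1}$ at stage $e$.

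The first assertion is routine. The crux is the second. I expect the inductive step to be the principal obstacle---carefully tracking how the Steenrod/power operations on the ASS interact with multiplication by $v$, which carries filtration $1$, and doing the filtration bookkeeping precisely enough to pin down the factor of $2$. A subsidiary difficulty is the base case, i.e.\ proving $d_2(x_9)\ne0$ without presupposing the final answer; this may require the low-degree part of $k(1)_*(K_2)$ (for instance via the Bockstein from the $ku_*(K(\zt,2))$ computations of \cite{DW}) or an extra sparseness input, since bounding tower lengths directly from $k(1)_*(K_2)$ appears to demand essentially the full computation.
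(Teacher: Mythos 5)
Your handling of the first assertion (that $x_{2^e}$ is a permanent cycle) is essentially the paper's: $x_{2^e}$ is the image of a class from $CP^\infty$, whose ASS collapses, so it survives. That part is fine.

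The second assertion is where there is a genuine gap, and you have correctly identified it as the crux without closing it. Your proposed base case is circular relative to how the lemma is used: in the paper, the statement ``$v^ix_4$ must be hit for some $i\le2$'' is precisely what \emph{forces} $d_2(x_9)=v^2x_4$ (via the sparseness count you give), so you cannot establish $d_2(x_9)\ne0$ first by ``some other means'' and then feed it back into the lemma --- and you concede you do not actually have such a means. Likewise the inductive step via ``Steenrod/power operations on the ASS doubling the tower height'' is not a worked argument: no such operation structure is set up here, and you yourself flag the filtration bookkeeping as an unresolved obstacle. As written, neither the base case nor the inductive step of your second part is proved.

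The missing idea is that the bound $i\le2^{e-1}$ is not a spectral-sequence fact to be extracted by induction on differentials; it is a relation in the abutment $k(1)_*(K_2)$ coming from geometry. Let $f:CP^\infty\to CP^\infty$ be the $H$-space squaring map and $g:CP^\infty\to K_2$ the map classifying the mod-2 reduction of the generator of $H^2$. The composite $g\circ f$ is nullhomotopic, so $g_*\circ f_*=0$ on $k(1)_*$. By the Ravenel--Wilson formula $f_*(\b_{2i})=v^i\b_i$ (\cite{RW}), one gets $v^ig_*(\b_i)=g_*f_*(\b_{2i})=0$ for every $i$; taking $i=2^{e-1}$ and noting $g_*(\b_{2^{e-1}})=x_{2^e}$ yields $v^{2^{e-1}}x_{2^e}=0$ in $k(1)_*(K_2)$. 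Since $x_{2^e}$ is a permanent cycle supporting an infinite $v$-tower in $E_2$, this vanishing in the abutment can only be accounted for by a differential hitting $v^ix_{2^e}$ for some $i\le2^{e-1}$. No induction, no power operations on the ASS, and no external low-degree computation of $k(1)_*(K_2)$ are needed.
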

\begin{proof} Let $f:CP^\infty\to CP^\infty$ denote the $H$-space squaring map, and $g:CP^\infty\to K_2$ correspond to the nonzero element of $H^2(CP^\infty;\zt)$. The composite $g\circ f$ is trivial, and so $g_*:k(1)_*(CP^\infty)\to k(1)_*(K_2)$ sends all elements in $\im(k(1)_*(CP^\infty)\mapright{f_*}k(1)_*(CP^\infty))$ to 0. Let $\b_i\in k(1)_{2i}(CP^\infty)$ be dual to $y^i$, where $y$ generates $k(1)^*(CP^\infty)$. By \cite[Theorem 3.4]{RW}, $f_*(\b_{2i})=v^i\b_i$. Thus $v^ig_*\b_i=g_*(v^i\b_i)=0$. Since $g_*\b_i$ is dual to $u^i$, and $x_{2^e}$ is dual to $u^{2^{e-1}}$, we obtain that  $g_*(\b_{2^{e-1}})=x_{2^e}$, hence $v^{2^{e-1}}x_{2^e}=0$.
\end{proof}

The remainder of this note is devoted to the proof of Theorem \ref{main}.
The novelty of the proof is using higher differentials to deduce
lower differentials.
We need to get some terminology straight before we proceed.
We sometimes write $Vx$ to mean $v^i x$ for
some $i$.
We write $E_r$ for the pages of the ASS,
ignoring $v$-torsion.
For example, by (\ref{E2}),
we start with $E_2 = P[v]\otimes E[p_2,p_3]\otimes \Lambda_{2,2}$, which
ignores all of the elements on the zero line killed by multiplication by $v^1$.
When we have $d_r(y) = v^r x$, both the $y$ and $x$ are not in $E_{r+1}$.
Our preference is to think in terms of reduced groups, ignoring the $v$-tower
on $1$, so that our computations eventually kill all $v$-towers, but it
is just as well to leave the $v$-tower on $1$ there.

The following series of statements  will be proved by induction. They imply Theorem \ref{main}.

$$
\begin{array}{lcc}
E_{h'(e-1)+1} = P[v]\otimes E[p_e,p_{e+1}] \otimes \Lambda_{e,e} &	& (1,e) \\
&	& \\
d_{h(e)}(p_e) = v^{h(e)} y_e &\qquad \qquad	& (2,e) \\
&	& \\
E_{h(e)+1} = P[v]\otimes E[y_e p_e,p_{e+1}] \otimes \Lambda_{e+1,e}\text{ for }e\ge4 &	& (3,e) \\
&	& \\
d_{h'(e)}(z_e) = v^{h'(e)} y_e p_e &	& (4,e) \\
\end{array}
$$

Before we move on to the proof, we provide an overview of how it will go.

The only differentials
possible go from $v$-towers to $v$-towers.
Once $v$-torsion is created, its generator
cannot be a differential source and its truncated tower is too short to
be the target of a differential.

Our inductive proof loosely deals with some of the lowest degree elements
remaining at each stage.  Two differentials of different degree are forced
on us each time, namely $d_{h(e+2)}(p_{e+2}) = v^{h(e+2)}y_{e+2}$, $(2,e+2)$, and
$d_{h'(e)} = v^{h'(e)} = v^{h'(e)} y_e p_e$, $(4,e)$.  Since these
{\em must} happen,
the involved elements cannot be sources or targets of lower degree
differentials.  This is important, because, in principle, some differential
could be happening in some very high degree.  If this happened, the
$E_r$ we describe would be incorrect and we would need some subquotient
of our description instead.  However, as our induction proceeds and
we eventually get to that high degree, our results are forced and we
see that this could not have happened.  Because this possibility does
not happen and our proof shows it, we will ignore it and write the
$E_r$ as it really is without the fuss of the possibility that does not happen.

The derivation property of Adams differentials implies that the
differentials respect products.  The only minor concern for that comes early
with $e=2$ and $3$, due to the fact that $h(2)=h'(2)$ and $h(3)=h'(3)$, where $d_{h(e)}(p_e z_e) = v^{h(e)}(y_e z_e + p_e y_e p_e)
= y_e z_e$, since $p_e^2 = 0$.

We propose to prove our result using induction from the following statements:

$$
\begin{array}{lcl}
(1,e) \text{ and } (2,e) & \Rightarrow & (3,e) \\
&&\\
(3,e) \text{ and } (2,e+1) &\Rightarrow  &(2,e+2) \text{ and } (4,e) \\
&&\\
(3,e) \text{ and } (4,e) &\Rightarrow & (1,e+1)  \\
\end{array}
$$

We begin knowing statement $(1,2)$, which is just our $E_2$.
By Lemma \ref{CP}, $Vy_2$ must be hit by
a differential from an odd-dimensional class (abbr.~``hit by an odd class'')
of grading $\le 9$.  The only such class is $p_2$, so we obtain $d_2(p_2) = v^2y_2$,
which is statement $(2,2)$.  Using Lemma 3.1, we deduce a subquotient version of $(3,2)$. Note that $(3,2)$ is not valid because $h'(2)=h(2)$, so there is a second $d_2$-differential. A similar situation happens when $e=3$. Since $h'(3)=h(3)$, $(3,3)$ is only true as a subquotient because $(4,3)$ gives a second $d_4$-differential.  By Lemma \ref{CP},
$Vy_3$ must be hit by an odd class of grading $\le 17$.  By Lemma \ref{LDL}, it cannot
be $d_2(y_2p_2)$.  Therefore it must be $d_4(p_3) = v^4 y_3$.  This gives $(2,3)$.

That started  our induction.

The first and third induction statements follow directly from Lemma \ref{LDL}, together with our earlier remarks that there cannot be extraneous differentials.
The second line is the heart of the proof.
By $(3,e)$ we are working with generators in $E[y_e p_e, p_{e+1}]\otimes
\Lambda_{e+1,e}$.
If we applied $(2,e+1)$ to this, Lemma \ref{LDL} would give us $v$-towers on
$E[y_e p_e, y_{e+1} p_{e+1}]\otimes \Lambda_{e+2,e}$.
By Lemma \ref{CP}
and the derivation property, $Vy_{e+2}$ must be hit by an odd generator $x$
with
$2^{e+2} + 5 \le |x| \le 2^{e+3}+1$.
Only one of the two odd degree generators, $y_{e+1}p_{e+1}$, is in this range, but
by Lemma \ref{LDL} and $(2,e+1)$, $d_{h(e+1)}(y_{e+1}p_{e+1}) = 0$, and this is
exactly the differential that would be required to hit $Vy_{e+2}$.
We
still need an odd generator in the appropriate range.  The only way to get
this is using Lemma \ref{LDL} and the differential $(4,e)$.  This creates a generator
$y_e p_e \cdot z_e = p_{e+2}$, enabling the differential $(2,e+2)$.

\def\line{\rule{.6in}{.6pt}}

\end{document}